\newtheorem{thm}{Theorem}
\newtheorem{lemma}{Lemma}
\newtheorem{cor}{Corollary}
\DeclareMathOperator{\mex}{mex}
\date{\today} 
\title{Analysis of the Odd/odd vertex removal game} 
\author{Oliver Krüger} 
\begin{document} 
\begin{titlepage} \begin{center} 
\Huge \textsc{Analysis of odd/odd vertex removal games on special graphs}\\
$\;$ \\
\large Revised Master Thesis, Royal Institute of Technology - KTH, 2012 \\
\large Oliver Krüger
okruger@math.su.se
\end{center}

\begin{abstract}
We analyze the Odd/odd vertex removal game introduced by P. Ottaway. We
prove that every bipartite graph has Grundy value 0 or 1 only depending
on the parity of the number of edges in the graph, which is a generalization
of a conjecture of K. Shelton. We also answer a question originally posed
by both Shelton and Ottaway about the existance of graphs for every
Grundy value. We prove that this is indeed the case.  
\end{abstract}
\end{titlepage}
\newpage 

\section{Introduction}

In this paper we will analyze some properties of the Odd/odd vertex removal
game introduced by Ottaway\cite{ottaway,art}. In particular we will answer a
question posed by both Ottaway and Shelton\cite{shelton} about the existance
of Odd/odd vertex removal games for every Grundy value.

We will assume that the reader is familiar with the Sprague-Grundy theory
about impartial combinatorial games (see, for example \cite{conwayimpartial} or
\cite{ww}). We will denote the Sprague-Grundy value (also called just Grundy
value) of a game $G$ by
$g(G)$, the disjunctive sum of games $G_1$ and $G_2$ by $G_1 + G_2$. And
the num-sum of two integers $a$ and $b$ by $a \oplus b$. The set of games
where the first player to move will win (when played optimally), i.e. where
$g(G) > 0$, will be denoted $\mathcal{N}$. The complement of that in the
set of all games will be denoted $\mathcal{P}$. We denote by $\mex$ the minimal
excludant function, $\mex(A)$ is the smallest nonnegative integer in
the set $A$.

Remember the main theorem of the Sprague-Grundy theory which states
\begin{thm}
\label{mainthm} \cite{conwayimpartial}
Let $H$ and $G$ be two impartial games. The Grundy value of the disjunctive
sum $G + H$ is
$$g(G + H) = g(G) \oplus g(H)$$
\end{thm}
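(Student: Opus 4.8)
The plan is to argue by induction on the position $G+H$ (the game tree is well-founded since play terminates), using the recursive definition $g(K) = \mex\{g(K') : K' \text{ an option of } K\}$. Writing $a = g(G)$ and $b = g(H)$, I first observe that an option of the disjunctive sum $G+H$ is either of the form $G'+H$ with $G'$ an option of $G$, or of the form $G+H'$ with $H'$ an option of $H$. Applying the induction hypothesis to each such option (both are strictly smaller positions) gives $g(G'+H) = g(G') \oplus b$ and $g(G+H') = a \oplus g(H')$, so that
$$g(G+H) = \mex\left(\{g(G') \oplus b\} \cup \{a \oplus g(H')\}\right),$$
where $G'$ and $H'$ range over the options of $G$ and $H$ respectively. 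It therefore suffices to prove that this mex equals $a \oplus b$.

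By the definition of mex, this amounts to two claims: (i) the value $a \oplus b$ does not occur in the set above, and (ii) every nonnegative integer $c < a \oplus b$ does occur. For (i), suppose $a \oplus b = g(G') \oplus b$ for some option $G'$; cancelling $b$ using the properties of nim-sum yields $a = g(G')$, which contradicts $a = g(G) = \mex\{g(G')\}$, since the mex of a set never belongs to it. The case $a \oplus b = a \oplus g(H')$ is symmetric.

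For (ii), fix $c < a \oplus b$ and set $d = c \oplus a \oplus b$. Let $k$ be the position of the highest set bit of $d$; because $c < a \oplus b$, the number $a \oplus b$ has a $1$ in position $k$ while $c$ has a $0$ there, so exactly one of $a$, $b$ carries a $1$ in position $k$. Assume it is $a$, the other case being symmetric. Then $a \oplus d < a$, since nim-addition by $d$ clears bit $k$ of $a$ and alters only lower-order bits. As $a = \mex\{g(G')\}$, every value below $a$ is attained by some option, so there is an option $G'$ with $g(G') = a \oplus d$; for this option $g(G') \oplus b = a \oplus d \oplus b = c$, placing $c$ in the set. The main obstacle is precisely step (ii): the delicate point is the bit-manipulation argument that produces an explicit option realising each target value $c$, and in particular the observation that $a \oplus d < a$, which is what lets me invoke the mex property of $G$ to guarantee the required option exists. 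Everything else is bookkeeping with the definition of the disjunctive sum and the elementary algebra of $\oplus$.
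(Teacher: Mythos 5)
Your proof is correct. Note that the paper does not actually prove this theorem: it is stated as background and attributed to the literature (Conway), so there is no in-paper argument to compare against. What you have written is the standard classical proof of the Sprague--Grundy theorem, and all the delicate points are handled properly: the induction on the well-founded game tree, the exclusion of $g(G)\oplus g(H)$ from the option values via the mex property, and the bit-level argument showing that every $c < g(G)\oplus g(H)$ is realised by an option (in particular the key observation that $a \oplus d < a$ when $d$'s highest set bit is carried by $a$, which licenses the appeal to $a = \mex\{g(G')\}$).
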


\subsection{Impartial vertex removal games}

Here follows a brief description of the Odd/odd vertex removal game which
we are about to study including some interesting previous results on
mainly due to Ottaway and Shelton.

In this paper we are going to analyze a vertex removal game that was introduced
by Ottaway. The vertex removal games are played on graphs and each move
consists of removing a vertex and all the edges to and from that vertex. We
decide what vertices we are allowed to remove by some rule concerning the
parity of the degree of the vertices. For these games every position may be
denoted by a graph and we will make no distinction between a graph and a
position in these games.

For example one of the two players may only be allowed to remove edges of even
degree and the other only edges of odd degree. This example is obviously a
partial game since the two players have a different set of options. Games of
the same type but played in digraphs is also possible, but will not be
considered here.

Shelton studies a more general problem than this[6], where
one considers every vertex in a graph to be a coin, i.e. a binary value of
either heads or tails. A player can remove any coin with heads up, and then
flip any adjacent coins (adjacent vertices in the graph). This is more general
than the problem we will study here. This type include the vertex removal game
since if one starts with heads up on every vertex with the parity we want to be
able to take this game is equivalent to the vertex removal game. Furthermore
Shelton proves some results for the Grundy values of these types of games on
particular graphs and that the problem played on directed graphs is
PSPACE-Complete. He also makes two conjectures concerning the odd/odd vertex
removal game which we will study.

We will prove one of these conjectures and
also prove that there are graphs for every Grundy value.

We will be concerned with the impartial type of vertex removal games in
undirected graphs and in particular we will analyze the game where both
players only remove edges of odd degree, the other impartial game has the
following simple charactrization.

\begin{thm}
\cite[Theorem 3.1.3]{ottaway}
When both players can remove only even degree vertices, the game on graph
$G=(V,E)$ is trivial and we have that:
$$g(G) = \left\{
\begin{array}{l l}
0 & \textsl{if} \; |V| \; \textsl{is even} \\
1 & \textsl{if} \; |V| \; \textsl{is odd}
\end{array}
\right.
$$
\end{thm}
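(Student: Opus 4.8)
The plan is to argue by strong induction on the number of vertices $|V|$, using a single parity observation drawn from the handshaking lemma. First I would record the crucial fact: in any graph the number of odd-degree vertices is even, so the number of \emph{even}-degree vertices---precisely the set of legal moves in this game---is congruent to $|V|$ modulo $2$. In particular, whenever $|V|$ is odd at least one vertex may be removed, whereas when $|V|$ is even the number of available moves is even, possibly zero.

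For the base case I take the empty graph, $|V|=0$, which has no moves and hence $g(G)=\mex(\emptyset)=0$, matching the claim since $0$ is even. For the inductive step I observe that every legal move deletes exactly one vertex, so each option of $G$ is a graph on $|V|-1$ vertices. By the induction hypothesis all such options therefore share the \emph{same} Grundy value, determined solely by the parity of $|V|-1$; this is what makes the $\mex$ computation trivial.

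I then split into the two parity cases. If $|V|$ is even, then every option has $|V|-1$ odd and so, by hypothesis, Grundy value $1$; since $\mex(\{1\})=0$ and also $\mex(\emptyset)=0$, we obtain $g(G)=0$ regardless of whether any moves exist. If $|V|$ is odd, then every option has $|V|-1$ even and so Grundy value $0$; here the parity observation guarantees the option set is nonempty, so $g(G)=\mex(\{0\})=1$. In both cases this is exactly the asserted value.

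The argument is essentially immediate once the handshaking observation is in place. The only point requiring care---and the closest thing to an obstacle---is the subcase where $|V|$ is even but no legal move exists, where one must remember that $\mex(\emptyset)=0$ rather than treat it as undefined, together with confirming in the odd case that an even-degree vertex always exists so that the $\mex$ is taken over a nonempty set.
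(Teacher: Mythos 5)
Your proof is correct and takes essentially the same route as the paper: both arguments rest on the handshaking lemma (a graph with $|V|$ odd must have an even-degree vertex, so such positions are never terminal) together with the fact that every move flips the parity of $|V|$. The paper phrases this as an informal winning-strategy argument identifying the $\mathcal{P}$- and $\mathcal{N}$-positions, whereas your induction computes the Grundy values explicitly via $\mex$, including the $\mex(\emptyset)=0$ subcase, which the paper leaves implicit.
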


This result follows from the fact that any graph with an odd number of
vertices must have at least one even degree vertex. This means that no
position with an odd number of vertices can be terminal. Since $|V|$
alternates between being odd and even the player who always moves to a
position with an even number of vertices will win. Thus we get the theorem
above.

\subsection{Odd/odd vertex removal}
\label{oddoddintro}

The type of game where both players are only allowed to remove vertices
of an odd degree is the main object of study in this paper. We will in this
section present what results are known and in Section \ref{results} we will
show some new results.

% CODE FOR INCLUSION OF THE OLD PICURE
%\begin{figure}[H]
%  \centering
%  \subfloat[$g(P_5)=0$]{\label{fig:P5}\includegraphics[scale=0.6]{P5.eps}\hspace{12pt}} 
%  \subfloat[$g(K_4)=1$]{\hspace{12pt}\label{fig:K4}\includegraphics[scale=0.6]{K4.eps}\hspace{12pt}} 
%  \subfloat[$g(S_6)=1$]{\hspace{12pt}\label{fig:S6}\includegraphics[scale=0.6]{S6.eps}\hspace{12pt}} 
%  \subfloat[$g(K_{3,2})=0$]{\hspace{15pt}\label{fig:K32}\includegraphics[scale=0.6]{K32.eps}} 
%  \caption{Examples of positions with known Grundy value. The vertices
%which are removable in the positions are filled in.}
%  \label{fig:examples_old}
%\end{figure}

\begin{figure}[H]
 \centering
 \input{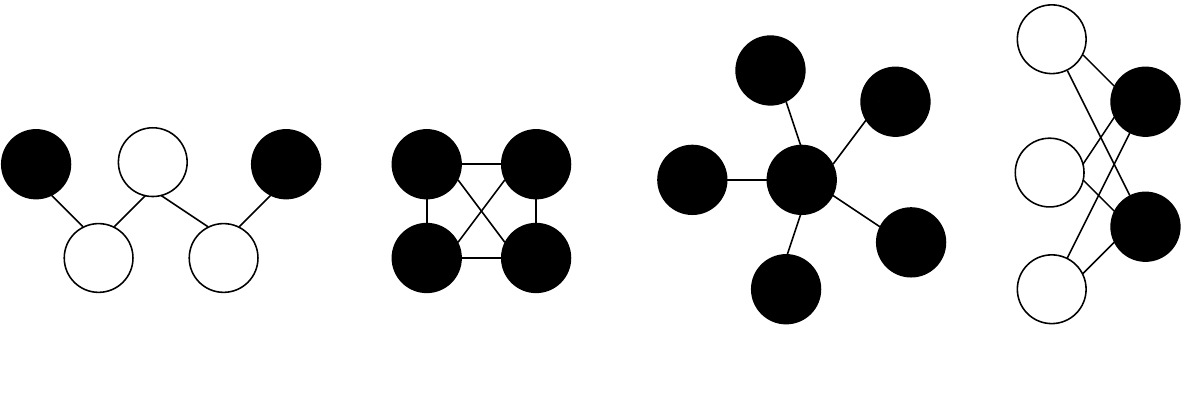_t}
  \caption{Examples of positions with known Grundy values. The vertices
which are removable in the positions are filled in.} 
 \label{figure:examples}
\end{figure}

\begin{thm}
\cite{ottaway} 
\label{hejsan}
When both players can remove only odd degree vertices we have that the
path on $n$ vertices, $P_n$, the complete graph on $n$ vertices, $K_n$
and the star on $n$ vertices, $S_n$ have the same Grundy value
$$g(P_n) = g(K_n) = g(S_n) = \left\{
\begin{array}{l l}
1 & \textsl{if} \; n \; \textsl{is even} \\
0 & \textsl{if} \; n \; \textsl{is odd}
\end{array}
\right.$$ 

The complete bipartite graph $K_{n,m}$ has Grundy value
$$g(K_{n,m}) = \left\{
\begin{array}{l l}
1 & \textsl{if} \; n \; \textsl{and} \; m \; \textsl{are both odd} \\
0 & \textsl{otherwise}
\end{array}
\right.$$
\end{thm}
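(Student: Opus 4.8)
The plan is to treat all four families by induction on the number of vertices (for $P_n$, $K_n$, $S_n$) and by induction on $n+m$, equivalently a double induction on the two part-sizes, for $K_{n,m}$. The single unifying idea is that each of these graph classes is closed under the legal moves of the game: deleting an odd-degree vertex from a member of the family yields either a smaller member of the same family or an edgeless graph, so the set of options has a completely predictable Grundy spectrum and the value drops out of a one-line $\mex$ computation. I would first record the two facts used throughout: a graph with no odd-degree vertex is terminal with value $0$ (in particular any edgeless graph, i.e.\ a disjoint union of isolated vertices), and the single vertex and the one-edge graph are the first nontrivial cases.

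For the path, every internal vertex has degree $2$ and is never removable, while each of the two endpoints has degree $1$; hence the only move from $P_n$ (for $n\ge 2$) deletes an endpoint and produces $P_{n-1}$. Thus $g(P_n)=\mex\{g(P_{n-1})\}$, and with $g(P_1)=0$ the values alternate $0,1,0,1,\dots$, giving $1$ for even $n$ and $0$ for odd $n$. For $K_n$ every vertex has degree $n-1$, so if $n$ is odd the position is terminal and $g=0$, while if $n$ is even every vertex is removable and every move lands on $K_{n-1}$ with $n-1$ odd, whence $g(K_n)=\mex\{0\}=1$; an induction on $n$ closes this. The star $S_n$ is the only one needing two kinds of move: its $n-1$ leaves (degree $1$) are always removable and deleting one gives $S_{n-1}$, while the center (degree $n-1$) is removable exactly when $n$ is even, and deleting it leaves $n-1$ isolated vertices of value $0$. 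For odd $n$ all options go to $S_{n-1}$ (value $1$ by induction), so $g(S_n)=\mex\{1\}=0$; for even $n$ both kinds of move reach value $0$, so $g(S_n)=\mex\{0\}=1$.

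The substantive case is $K_{n,m}$, which I would prove by induction on $n+m$ with base cases $K_{n,0}$ and $K_{0,m}$ (edgeless, value $0$). Here a vertex in the part of size $n$ has degree $m$ and a vertex in the part of size $m$ has degree $n$, so removability is governed purely by the parity of the opposite part's size; deleting a vertex from the $n$-part gives $K_{n-1,m}$ and from the $m$-part gives $K_{n,m-1}$. I would then split into four parity cases. If $n,m$ are both even the position is terminal and $g=0$. If exactly one is odd, say $n$ odd and $m$ even, only the $m$-part vertices move and every option is $K_{n,m-1}$ with both indices odd, so by the induction hypothesis $g=\mex\{1\}=0$; the reverse subcase is symmetric. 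If both are odd then both kinds of move are legal, and each lands on a $K_{n-1,m}$ or $K_{n,m-1}$ with one index now even, all of value $0$, giving $g=\mex\{0\}=1$.

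I do not expect a deep obstacle; the work is entirely in the bookkeeping. The point requiring the most care is the parity accounting in the bipartite case—verifying that in each of the four cases every reachable option has exactly the value the induction hypothesis predicts, and in particular checking that the degenerate boundary positions ($K_{n,0}$, $K_{0,m}$, isolated vertices, and the edgeless graph left after removing a star's center) are correctly identified as terminal with value $0$, so that the base of each induction is sound. As a consistency check I would note that $K_{n,m}$ has $nm$ edges, which is odd precisely when $n$ and $m$ are both odd, matching the claimed value and foreshadowing the general bipartite statement.
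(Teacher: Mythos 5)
Your proof is correct: in each family the legal moves are exactly as you describe, the inductions close, and the boundary positions (isolated vertices, $K_{n,0}$, the leaves left after removing a star's center) are rightly identified as terminal with value $\mex(\emptyset) = 0$. Note, however, that the paper does not prove this statement at all --- it is quoted from Ottaway --- so the fair comparison is with the paper's own Theorem \ref{thm:grundybipart}, which is proved by a genuinely different route: via Euler's theorem, every terminal position of an odd/odd game on a bipartite graph has an even number of edges, and since each move deletes an odd number of edges, the edge parity alternates with every move; hence $g(G) \in \{0,1\}$ and is determined by the parity of $|E|$ alone. That single argument disposes of $P_n$ (with $n-1$ edges), $S_n$ (with $n-1$ edges) and $K_{n,m}$ (with $nm$ edges) simultaneously --- exactly the consistency check you make in your last sentence --- and is strictly more general, covering all bipartite graphs with no case analysis of which vertices are removable. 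What it cannot reach is $K_n$, which is not bipartite for $n \geq 3$: indeed $K_4$ has six edges, an even number, yet $g(K_4)=1$, so the edge-parity formula genuinely fails there. Your move-by-move induction buys the complete-graph case, which the paper handles only by citation, at the cost of treating each family separately; the two methods are complementary rather than competing.
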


The classes of path graphs, $P_n$, stars, $S_n$ and complete bipartite
graphs $K_{n,m}$ are all contained in the class of bipartite (not only
complete but arbitrary) graphs. In Section \ref{results} we prove a
result that gives us a simple function for computing the Grundy value
of any bipartite graph. This will also prove a conjecture of
Shelton\cite{shelton} which states that all grid graphs have Grundy value
either 0 or 1, which we will prove is true for any bipartite graph.

For the generalization of the Odd/odd vertex removal games to bipartite graph
where you only consider the parity of the out degree. It is known that unless
$P=PSPACE$ it is computationally difficult to decide the winner given a
a such a game.

\begin{thm}
\cite[Theorem 15]{shelton}
The game of Odd/odd vertex removal on directed graphs is PSPACE-Complete.
\end{thm}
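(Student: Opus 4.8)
The plan is to prove two separate statements: that deciding whether a given position is an $\mathcal{N}$-position lies in PSPACE, and that this decision problem is PSPACE-hard; together these give PSPACE-completeness. Membership in PSPACE is the routine half. Since every legal move deletes a vertex, any play of the game on a directed graph $G=(V,E)$ lasts at most $|V|$ moves, so the game tree has depth at most $|V|$. A standard recursive alternating evaluation suffices: at the current position loop over the (at most $|V|$) vertices of odd out-degree, recurse on each resulting position, and declare the position to be in $\mathcal{N}$ iff some child lies in $\mathcal{P}$. This explores only one root-to-leaf path of the recursion at a time, using $O(|V|)$ stack frames each of polynomial size, which is a polynomial-space decision procedure.

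The substantive half is PSPACE-hardness, which I would establish by a polynomial-time reduction from a known PSPACE-complete game. Because a position is itself a directed graph and a move is a local, parity-controlled deletion, the most natural source is the truth of a quantified Boolean formula (TQBF), viewed as the associated two-player formula game; an alternative starting point would be directed (vertex) Generalized Geography, itself a graph game known to be PSPACE-complete. The idea is to build, from a quantified formula $\Phi = Q_1 x_1 \cdots Q_k x_k\,\varphi$, a directed graph $G_\Phi$ with a distinguished ``active'' region, so that the forced sequence of odd-out-degree removals simulates in order the choice of truth values for $x_1,\dots,x_k$ (existential choices made by the player who wants $\Phi$ true, universal ones by the opponent), followed by a verification phase that evaluates $\varphi$. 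The heart of the construction is a variable gadget in which exactly one vertex has odd out-degree when it is that variable's turn, offering the mover a binary choice through which of two twin vertices is removable; removing the chosen vertex must record the assignment by flipping the parities of the appropriate literal/clause vertices downstream, and must expose the next gadget's decision vertex while leaving no other vertex removable.

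The engineering difficulty — and the step I expect to be the main obstacle — is controlling \emph{parity} globally. Deleting a vertex $v$ erases every edge incident to $v$ and hence flips the out-degree parity of each in-neighbour of $v$, so a purely local choice has nonlocal side effects. One must therefore introduce padding/dummy vertices and, where needed, self-cancelling pairs of parallel edges so that at every stage the \emph{only} legal moves are the intended ones and the turn parity matches the quantifier alternation $Q_1,\dots,Q_k$. Once the gadgets are correct, the verification argument splits into two implications: if $\Phi$ is true then the designated player can steer play so that the verification phase hands the opponent a position with no odd-out-degree vertex (a $\mathcal{P}$-position for them), and if $\Phi$ is false then every line of play eventually leaves that same player unable to move. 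Since $G_\Phi$ is plainly computable in polynomial time and TQBF is PSPACE-complete, PSPACE-hardness follows, and combined with membership this yields that the game is PSPACE-complete.
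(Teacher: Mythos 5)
This theorem is not proved in the paper at all: it is imported verbatim from Shelton \cite[Theorem 15]{shelton}, so there is no internal proof to compare against, and your proposal must be judged on its own merits. On those merits there is a genuine gap, located exactly where the entire content of the theorem lies. The PSPACE-membership half is fine (polynomially bounded game length plus depth-first alternating evaluation is the standard argument). But the hardness half is only a declaration of intent: you name a source problem (TQBF, or alternatively Generalized Geography), describe what a variable gadget \emph{must} accomplish, and then explicitly defer the central difficulty --- that deleting a vertex flips the out-degree parity of every one of its in-neighbours, so each choice has nonlocal side effects --- to unspecified ``padding/dummy vertices.'' No gadget is exhibited, no reduction map is defined, and neither of the two implications ($\Phi$ true implies the designated player wins the constructed game; $\Phi$ false implies that player loses) is argued. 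Since building gadgets in which the \emph{only} legal moves are the intended ones, with turn order matching the quantifier alternation, is precisely the hard part of any such reduction, what you have written is a plan for a proof rather than a proof.

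A smaller but concrete flaw in your toolkit: the ``self-cancelling pairs of parallel edges'' cannot do anything in this game. Legality depends only on out-degree parity; a pair of parallel edges from $u$ to $v$ changes no vertex's parity while present, and when $v$ is removed both edges vanish together, again changing no parity. So that device is a game-theoretic no-op (and is unavailable anyway if positions are simple digraphs). All parity control must therefore come from the arrangement of single edges and auxiliary vertices --- which is exactly the construction you have not supplied. To complete the argument you would need either to work out such gadgets in full detail and verify both directions of the reduction, or simply to cite Shelton's proof, as this paper does.
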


\section{Odd/odd vertex removal analysis}
\label{results}

In this section we will analyze the odd/odd vertex removal game in
some particular graphs. We start by proving a result for bipartite
graphs. Bipartite graphs include such graph classes as paths, grids,
stars, trees and $k$-partite graphs. Recall that one way of defining
a \emph{bipartite} graph is as a graph where every closed trail
has even length\cite[Proposition 1.6.1]{diestel}.

\subsection{Bipartite positions}

We will make use of the following classic theorem by Euler:
\begin{thm}
\cite[Theorem 1.8.1]{diestel}
\label{thm:euler}
A connected graph G has a closed Eulerian trail (sometimes called
Eulerian circuit) if and only if every vertex has even degree.
\end{thm}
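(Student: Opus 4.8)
The statement is the classical Euler characterization, and the plan is to prove the two implications separately: the forward direction by a counting argument and the reverse by induction on the number of edges.

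First I would prove necessity. Suppose $G$ admits a closed Eulerian trail, written as a cyclic sequence of edges $e_1, e_2, \dots, e_m$ in which every edge of $G$ appears exactly once and consecutive edges share a vertex, with the trail returning to its start. Fix a vertex $v$. Each time the trail passes through $v$ it enters along one edge and leaves along another, pairing up the edge-ends incident to $v$; because the trail is closed, even the starting vertex is entered and left the same number of times. Since every edge of $G$ occurs exactly once in the trail, the edges incident to $v$ are partitioned into such pairs, so $\deg(v)$ is even. As $v$ was arbitrary, every vertex has even degree.

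For sufficiency I would argue by induction on $|E|$. The base case $|E| = 0$ forces $G$ to be a single vertex by connectedness, whose trivial walk through no edges is a closed Eulerian trail. For the inductive step, observe that a connected graph with at least one edge in which every degree is even has minimum degree at least $2$, hence contains a cycle $C$; I would justify this by following unused edges greedily from any starting vertex, a process that can never get stuck except by returning to an already-visited vertex, thereby closing a cycle. Deleting the edges of $C$ yields a graph $G'$ in which every vertex still has even degree, since exactly $0$ or $2$ edges were removed at each vertex. Applying the inductive hypothesis to each nontrivial connected component of $G'$ produces a closed Eulerian trail on each such component.

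The final and most delicate step is the splicing. Because $G$ is connected, every component of $G'$ containing an edge must meet the cycle $C$ in at least one vertex. I would traverse $C$ and, at the first visit to a vertex lying in such a component, insert that component's closed Eulerian trail before continuing along $C$, doing this once for each component; the result is a single closed walk that traverses every edge of $C$ and of every component of $G'$ exactly once, i.e. a closed Eulerian trail of $G$. The main obstacle is making this merging rigorous: one must verify that each component is picked up exactly once and that the inserted sub-trails do not overlap, which is precisely where the connectivity hypothesis and the edge-disjointness of $C$ from $G'$ are used.
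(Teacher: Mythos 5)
Your proof is correct, but note that the paper itself offers no proof of this statement at all: it is quoted as a classical result of Euler, cited directly to Diestel's textbook (Theorem 1.8.1), so there is no in-paper argument to compare against. Your two-part argument is the standard one: the edge-end pairing at each vertex gives necessity, and for sufficiency the induction on $|E|$ with extraction of a cycle $C$, application of the inductive hypothesis to the even-degree components of $G - E(C)$, and splicing of the component trails into a traversal of $C$ is sound; the one point you gloss over (that a closed Eulerian trail of a component can be rotated to begin at its first meeting point with $C$) is routine. Your convention that the empty trail at an isolated vertex counts as a closed Eulerian trail also matches the convention the paper adopts in its Corollary \ref{cor:terminal}, so the statement as you prove it is the one the paper actually uses.
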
 

\begin{cor}
\label{cor:terminal}
A position $P$, in an odd/odd vertex deletion game, is terminal if and
only if
$P$ is a graph where every connected component has a closed Eulerian
trail (with the convention that the empty trail, in the single vertex
component, is a closed Eulerian trail).
\end{cor} 

\begin{lemma}
\label{lemma:terminalbipart}
Every terminal position, $P$, in an odd/odd vertex removal game played
on a bipartite graph, $G$, has an even number of edges.
\end{lemma}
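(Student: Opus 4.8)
The plan is to combine two elementary observations: a terminal position has every vertex of even degree, and in a bipartite graph the total number of edges equals the sum of degrees taken over just one side of the bipartition. First I would note that the position $P$ arises from the starting graph $G$ by deleting vertices together with their incident edges; since every subgraph of a bipartite graph is again bipartite, $P$ inherits a bipartition $V(P) = A \cup B$ in which every edge of $P$ joins a vertex of $A$ to a vertex of $B$.

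Next, I would invoke Corollary \ref{cor:terminal}: because $P$ is terminal, each connected component of $P$ admits a closed Eulerian trail, so by Theorem \ref{thm:euler} every vertex of $P$ has even degree. This is the only place where the hypothesis that $P$ is \emph{terminal} is used.

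The final step is a counting argument. Since $P$ is bipartite with sides $A$ and $B$, each edge has exactly one endpoint in $A$; hence summing the degrees over the vertices of $A$ counts every edge exactly once, so that
$$|E(P)| = \sum_{v \in A} \deg_P(v).$$
Each summand $\deg_P(v)$ is even by the previous step, so the right-hand side is a sum of even numbers and is therefore even. Thus $|E(P)|$ is even, which is exactly the claim.

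I do not expect a serious obstacle here, since the argument is short and each ingredient is already in place. The only points requiring a little care are the remark that deleting vertices preserves bipartiteness — so that $P$ genuinely carries a bipartition to sum over — and the standard identity that in a bipartite graph the number of edges equals the degree sum over either single side. Granting these, the even-degree condition supplied by the terminal characterization forces $|E(P)|$ to be even immediately.
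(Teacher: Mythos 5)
Your proof is correct, but it takes a genuinely different route from the paper's. The paper argues via trail parity: by Corollary \ref{cor:terminal} each component of a terminal position $P$ carries a closed Eulerian trail, and since $P$ is a subgraph of the bipartite graph $G$ it is itself bipartite, so every closed trail in it has even length; hence each component has an even number of edges, and $|E(P)|$ is the sum of these. You instead extract from the terminal condition only the fact that every vertex of $P$ has even degree, and finish with the one-sided handshake count $|E(P)| = \sum_{v \in A} \deg_P(v)$, a sum of even numbers. So the two proofs lean on different characterizations of bipartiteness: the paper uses the no-odd-closed-trail characterization (the one it recalls from Diestel), while you use the existence of an explicit bipartition $A \cup B$ so that edges can be counted from one side. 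Your version is arguably more elementary and self-contained: it does not need the parity of Eulerian trails at all, only the even-degree condition (which is essentially the definition of a terminal position in this game, with Euler's theorem serving merely as a translation) plus a standard counting identity. The paper's version, in exchange, exhibits the finer fact that \emph{each component} of a terminal position has even size, though nothing later in the paper depends on that refinement. Both arguments are complete and correct.
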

\begin{proof}
By Corollary \ref{cor:terminal} each edge in $P$ is part of some closed
Eulerian trail. Since every subraph of a bipartite graph clearly is
bipartite (and by the defintion of a bipartite graph as only having
closed trails of even length)
we know that each such trail must have even
length by noting that a terminal position must be a subgraph of the
initial graph $G$.

Thus each connected component of $P$ has an even number of edges. The
total number of edges is the sum of these.
\end{proof}

\begin{thm}
\label{thm:grundybipart}
Let $G = (V,E)$ be a bipartite graph, then
$$g(G) = \left\{
\begin{array}{l l}
1 & \textsl{if} \; |E| \; \textsl{odd} \\
0 & \textsl{if} \; |E| \; \textsl{even}
\end{array}
\right.$$ 
\end{thm}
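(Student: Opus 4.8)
The plan is to prove the statement by strong induction on the number of edges $|E|$, with Lemma \ref{lemma:terminalbipart} supplying the crucial input. The key preliminary observation is a \emph{parity invariant}: every legal move in the odd/odd game deletes a vertex of odd degree and therefore removes an odd number of edges (in particular at least one, since an odd degree is at least $1$). Consequently each move strictly decreases $|E|$, which makes the induction well-founded, and each move flips the parity of $|E|$. A second observation is that the class of bipartite graphs is closed under vertex deletion, so every position reachable from a bipartite $G$ is again bipartite and Lemma \ref{lemma:terminalbipart} applies to all of them.

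For the base case I would take the terminal positions. A terminal position has no removable vertex, so its set of option values is empty and its Grundy value is $\mex(\emptyset) = 0$; by Lemma \ref{lemma:terminalbipart} such a position has an even number of edges, which is exactly what the statement predicts.

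The inductive step then splits on the parity of $|E|$. If $|E|$ is odd, Lemma \ref{lemma:terminalbipart} shows the position is \emph{not} terminal, so at least one removable vertex exists; every option $G'$ is bipartite with $|E(G')|$ even (the parity flips), so the induction hypothesis gives $g(G') = 0$. Hence the set of option values is the nonempty set $\{0\}$ and $g(G) = \mex(\{0\}) = 1$. If instead $|E|$ is even, every option $G'$ is bipartite with $|E(G')|$ odd, so the induction hypothesis gives $g(G') = 1$; the set of option values is therefore either empty (in the terminal case) or equal to $\{1\}$, and in both cases the smallest missing nonnegative integer is $0$, so $g(G) = 0$. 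This matches the claim in each case and closes the induction.

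The step I expect to require the most care is the odd-$|E|$ case, because the mex computation there depends entirely on knowing that the position is non-terminal: otherwise the option set could be $\emptyset$, wrongly yielding Grundy value $0$. This is precisely where Lemma \ref{lemma:terminalbipart} does the real work, guaranteeing that an odd number of edges forbids terminality. The parity invariant and the closure of bipartiteness under vertex deletion are routine, so the whole argument reduces to feeding the lemma into a clean two-case mex induction.
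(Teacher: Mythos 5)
Your proof is correct and follows essentially the same route as the paper: both rest on Lemma \ref{lemma:terminalbipart} together with the observation that each move deletes an odd number of edges, so the parity of $|E|$ flips at every turn and odd-edge positions can never be terminal. The only difference is presentational --- you organize it as a strong induction on $|E|$ with explicit $\mex$ computations, whereas the paper first identifies the $\mathcal{P}$/$\mathcal{N}$ partition by the parity argument and then notes the Grundy value can never exceed $1$; the substance is identical.
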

\begin{proof}
We need just use Lemma \ref{lemma:terminalbipart}  to see that all
terminal positions have an even number of edges. Thus any position
with an odd number of edges cannot be terminal.

With each move an odd number of edges is removed so the number of edges
remaining in the graph alternate between even and odd with every move.
So one player always moves to a position with an even number of edges
and the other always to a position with an odd number of edges. The player
who moves to a position with an even number of edges must always win.
This means that for $G=(V,E)$ we have $G \in \mathcal{P}$ if $|E|$ is
even and $G \in \mathcal{N}$ is $|E|$ is odd.

Since \emph{every} move from any $\mathcal{N}$-position takes you to a
$\mathcal{P}$-position we cannot get any Grundy value larger than one.
The assertion above follows.
\end{proof}

\begin{cor}
\cite[Conjecture 17]{shelton}
The Grundy value of an odd/odd vertex removal game played on a grid
graph is either 0 or 1.
\end{cor}

\subsection{Graphs for every Grundy value}

Another interesting question concerning odd/odd vertex removal games
is whether there are graphs for every possible Grundy value. This question
was first posed by Ottaway\cite{ottaway}. Shelton\cite{shelton} went even
further and made a conjecture which would answer this question even
for connected graphs.

We have already seen graphs of Grundy value 1 and 0. Ottaway gives the
following two examples of graphs with Grundy value 2:
\begin{figure}[H]
 \centering
 \input{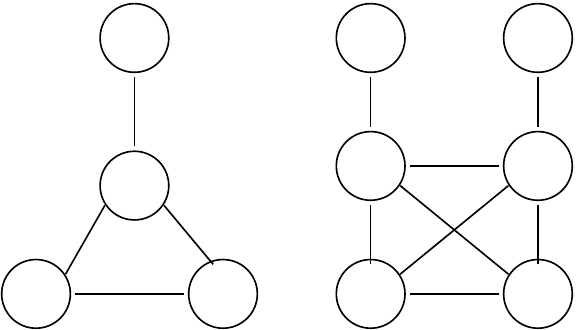_t} 
 \caption{Odd/odd vertex removal positions with Grundy value 2.}
 \label{figure:gv2s}
\end{figure} 

We will prove that given a set of graphs $G_0,G_1,...,G_n$ such that
$g(G_i) = i$ we can constrct a new graph $G$ such that $g(G) = n+1$ and
thus the existance of graphs for every Grundy value follows by induction. We
will use another construction than the one considered by Shelton in
\cite[Conjecture 16]{shelton}.

The main idea is that the game where the first player decides on which of the
graphs $G_0,G_1,...,G_n$ to play the game as the first move is a game $G$ with
Grundy value given by
$$g(G) = \mex\{g(G_0),g(G_1),...,g(G_n)\} = n+1$$
we will find a graph which simulates this choice of graph to play by forcing
the removal of one of $n+1$ vertices each leaving all but one of the $G_i$s
with only even degree vertices.

\begin{lemma}
\label{lollemma}
Let $G$ be a graph. Let $G'$ be the graph you obtain if you for every
vertex of degree 0 replace it by $P_3$ (the path on 3 vertices). Then
$$g(G) = g(G')$$
\end{lemma}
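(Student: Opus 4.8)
The plan is to reduce the claim to the Sprague--Grundy theorem (Theorem~\ref{mainthm}) by exploiting the fact that the odd/odd vertex removal game decomposes over connected components. A single move consists of deleting an odd-degree vertex together with its incident edges, and such a move lives entirely inside one connected component, leaving every other component untouched; moreover the components can only split further as play continues, never merge. Hence, if $C_1,\dots,C_k$ are the connected components of a graph, the game played on the whole graph is precisely the disjunctive sum of the games played on the $C_i$, so by Theorem~\ref{mainthm} its Grundy value is $g(C_1)\oplus\cdots\oplus g(C_k)$.

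Next I would pin down the two Grundy values that matter. A vertex of degree $0$ has even degree and so can never be removed; a component consisting of a single isolated vertex therefore admits no legal move and is terminal, giving it Grundy value $0$ (consistent with Corollary~\ref{cor:terminal}, since the empty trail counts as Eulerian). On the other hand, $P_3$ is the path on three vertices, so Theorem~\ref{hejsan} with $n=3$ odd yields $g(P_3)=0$; equivalently, every move from $P_3$ leads to a copy of $P_2$, which has Grundy value $1$, so $g(P_3)=\mex\{1\}=0$.

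With these pieces in place the lemma follows at once. Each degree-$0$ vertex of $G$ forms an isolated component, and because it has no incident edges the $P_3$ that replaces it is again a separate component attached to nothing; thus passing from $G$ to $G'$ merely swaps some number $m$ of Grundy-value-$0$ components (isolated vertices) for the same number of Grundy-value-$0$ components (copies of $P_3$), while leaving the union $H$ of all remaining components unchanged. Writing everything out, Theorem~\ref{mainthm} gives $g(G)=g(H)\oplus 0\oplus\cdots\oplus 0=g(H)$ and $g(G')=g(H)\oplus 0\oplus\cdots\oplus 0=g(H)$, since $x\oplus 0=x$, so $g(G)=g(G')$. There is no real obstacle here: the only two points needing care are the justification that the game is genuinely a disjunctive sum over its components (so that Theorem~\ref{mainthm} applies) and the verification that both an isolated vertex and $P_3$ have Grundy value $0$, and both are routine.
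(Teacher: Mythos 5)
Your proposal is correct and follows essentially the same argument as the paper: decompose the game into the disjunctive sum of its connected components, apply Theorem~\ref{mainthm}, and note that both an isolated vertex and $P_3$ have Grundy value $0$, so swapping one for the other leaves the nim-sum unchanged. Your write-up is in fact more careful than the paper's (which leaves the two Grundy-value-$0$ facts unjustified and contains a fragmentary sentence), but the underlying idea is identical.
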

\begin{proof}
Since a vertex of degree $0$. Since each component of the graph on which
one playes the game is disjunctive the graph $G$ can be thought of as the
disjunctive sum of its components, $G = C_1 + C_2 + ... + C_n$. Then by Theorem
\ref{mainthm} we know that
$$g(G) = g(C_1) \oplus g(C_2) \oplus ... \oplus g(C_n)$$
thus the assertion holds if $g(C_i) = g(P_3)$ for the one-vertex components,
but this is true since the both the one-vertex graph and the path on 3
vertices has Grundy value 0.
\end{proof}

The above lemma means that when we claim that (in the induction assumption)
that there are graphs $G_0,G_1,...,G_n$ for every Grundy value in $\{0,1,...,n\}$
we can take such graphs that all have at least two odd vertices. Every
graph with $g(G) > 0$ cannot have only even vertices by not being terminal
positions in the game and every graph has an even number of odd vertices
\ref{diestel}.

We may even assume that each of the graphs $G_0,G_1,...,G_n$ are connected
since we will construct a graph with Grundy-value $n+1$ which is connected.

\begin{thm}
For every $k \in \mathbb{N}$ there is a connected graph, G, such that
$$g(G) = k$$
\end{thm}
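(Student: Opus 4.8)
The plan is to prove the theorem by induction on $k$, using the construction sketched in the paragraph preceding the statement. The base cases $k=0$ and $k=1$ are already established (for instance $g(P_3)=0$ and $g(P_2)=1$, and these are connected). For the inductive step, I assume that for every $i \in \{0,1,\dots,n\}$ there is a connected graph $G_i$ with $g(G_i)=i$; by Lemma \ref{lollemma} and the remarks following it, I may further assume each $G_i$ has at least two vertices of odd degree. The goal is to build a single connected graph $G$ with $g(G)=n+1=\mex\{0,1,\dots,n\}$. The central idea is to realize $G$ as a game in which the first player is effectively forced to select which $G_i$ the rest of the game will be played on, so that the options of $G$ have Grundy values exactly $\{g(G_0),\dots,g(G_n)\}$, whence $g(G)=\mex\{0,\dots,n\}=n+1$ by the definition of the Grundy value.

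First I would describe the gadget explicitly. I would introduce $n+1$ ``selector'' vertices $v_0,\dots,v_n$, attach each $v_i$ to the graph $G_i$ in such a way that the combined graph is connected and each $v_i$ has \emph{odd} degree (so that it is a legal move to remove it), while simultaneously ensuring that removing $v_i$ leaves $G_i$ intact as a playable subgame and turns every \emph{other} selector vertex $v_j$ (and, more importantly, every vertex of every other $G_j$) into an even-degree vertex, so that those components become frozen. The precise attachment (for example, joining all selector vertices through a common auxiliary vertex, or linking them in a small cycle, and choosing the number of edges from $v_i$ into $G_i$ using the two guaranteed odd-degree vertices of $G_i$ to fix parities) is the technical heart of the argument. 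The key parity bookkeeping is that in the starting position the only odd-degree vertices available to the first player are exactly $v_0,\dots,v_n$, and removing $v_i$ must leave precisely the component $G_i$ ``live'' while neutralizing all the rest.

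Once the gadget is in place, the verification splits into two claims. The first claim is that the $n+1$ available first moves are exactly the removals of $v_0,\dots,v_n$, and that the position after removing $v_i$ has Grundy value $g(G_i)=i$: here I would argue that the frozen components (the other $G_j$ together with the leftover selector structure) contribute Grundy value $0$, so by Theorem \ref{mainthm} the disjunctive sum has value $g(G_i)\oplus 0 = i$. The second claim is simply that $\mex\{0,1,\dots,n\}=n+1$, which is immediate. Combining these gives $g(G)=\mex\{g(G_i)\}=n+1$, and since $G$ is connected by construction, the induction closes and the theorem follows for all $k \in \mathbb{N}$.

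I expect the main obstacle to be the parity engineering in the gadget: arranging that each selector vertex $v_i$ has odd degree (so it is removable), that these are the \emph{only} removable vertices initially, and that deleting $v_i$ simultaneously preserves the parities inside $G_i$ while flipping every vertex outside $G_i$ to even degree so those pieces die. This is exactly where the assumption that each $G_i$ has at least two odd-degree vertices (guaranteed by Lemma \ref{lollemma}) does the work, since it gives the freedom to wire up the selectors without disturbing the internal structure of the $G_i$. Getting all these parities to hold at once, and confirming that no unintended removable vertices appear in the initial position, is the step that needs the most careful checking; everything after that is a routine application of Theorem \ref{mainthm} and the $\mex$ computation.
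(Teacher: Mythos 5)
Your overall strategy is the same as the paper's (a $\mex$ gadget: selector vertices $v_0,\dots,v_n$, each of odd degree, whose removal unlocks exactly one $G_i$ and freezes everything else), but your writeup has a genuine gap: you never exhibit a gadget meeting the requirements you list, and you explicitly defer this as ``the technical heart of the argument.'' That deferred step is the actual content of the theorem, and the two constructions you float both fail your own requirements. Note first that, to avoid creating unintended removable vertices inside $G_i$, the selector $v_i$ must be joined to precisely the odd-degree vertices of $G_i$, and every graph has evenly many such vertices; so the number of edges from $v_i$ into $G_i$ is forced to be even. Consequently: (a) if the selectors are linked in a small cycle, each $v_i$ has degree $2+(\text{even})$, which is even, so no selector is removable and the starting position is already terminal; (b) if the selectors are joined through a common auxiliary hub $u$, then removing $v_i$ does not change the degree of any other selector $v_j$ (its edges to $u$ and into $G_j$ are untouched), so $v_j$ stays odd, the rest of the graph is \emph{not} frozen, and $G-v_i$ is a disjunctive sum $G_i + H_i$ with $H_i$ still live; the needed identity $g(G-v_i)=g(G_i)$ collapses.

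The missing idea, which is how the paper resolves exactly this parity bind, is to make the selectors pairwise adjacent, i.e., put a complete graph on $\{v_i\}$, and to force the number of selectors to be \emph{even}. Then each $v_i$ has degree equal to an odd contribution from the clique plus an even number of edges into $G_i$, hence odd and removable; and deleting $v_i$ flips every other selector to even degree while leaving all vertices of every $G_j$, $j \ne i$, at even degree, so that component is terminal and $g(G-v_i)=0\oplus g(G_i)=g(G_i)$ by Theorem \ref{mainthm}. The evenness of the number of selectors is arranged by adjoining, when $K$ is even, one dummy pair $v_{-1}$, $G_{-1}=P_3$; since $g(P_3)=0$ is already among the option values, this does not change the $\mex$, which remains $K+1$. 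Without this clique-plus-parity-padding device (or an equally explicit substitute satisfying all the constraints you listed), your argument remains a plan rather than a proof.
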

\begin{proof}
We will prove this by induction on $k$. We have already seen examples of
connected graphs for $k \leq 2$.

Suppose that there are connected graphs with Grundy value $k$ for every
$k \leq K$. Take a collection of connected graphs $G_0,G_1,...,G_K$ such that
$g(G_i) = i$ for all $i = 0,1,...,K$. Moreover we may by Lemma \ref{lollemma}
assume that that the number of odd vertices in each of these graphs is even
and not less than 2.

We construct a new graph by taking the disjoint sum (can be seen as the
disjoint union of the sets of vertices and edges) of all the graphs
$G_0,G_1,...,G_K$ and then adding new vertices $v_0,v_1,...,v_K$. We connect
each $v_i$ to every odd degree vertex in $G_i$. In the case when $K$ is
even we add another $v_{-1}$ together with $G_{-1} := P_3$ and connect this
in the same way.

The set of vertices $\{v_i\}$ now consists of an even number of vertices
all having even degree. We can can add all possible edges between the
vertices in that set (so that they induce a complete graph). Then since
vertices of $K^{2n}$ always have odd degree so does the $\{v_i\}$. Call the
graph constructed in this way $G = (V,E)$.

In explicit terms; given $G_{-1} = P_3, G_0, ..., G_K$ such that $g(G_i) = i$ for $i=0,1,...,K$ and we write $G_i = (V_i,E_i)$ for $i=-1,0,...,K$, let
\begin{align*}
V &= \bigsqcup_{i \in I} V_i \cup \{v_i\}_{i\in I} \\
E &= \{v_iv_j | i < j, i,j \in I\} \cup \{v_i v | \deg(v) \textnormal{ even in }
  G_i,\; v \in V_i\} \cup \bigsqcup_{i \in I} E_i \\
\textnormal{where } I &= \{-1,0,1,...,K\} \quad \textnormal{ if } K \textnormal{ even} \\
      I &= \{0,1,...,K\} \quad \textnormal{ if } K \textnormal{ odd} 
\end{align*}
then we have $G = (V,E)$. It remains to prove that indeed $g(G) = K+1$.

Since each odd vertex of each graph $G_i$ have one additional neighbor in
$G$, all of the original vertices have even degree in $G$ (the original
even degree vertices are unchanged). On the other hand each \textsl{new}
vertex $v_i$ has even degree.

The Grundy value $g(G-v_i) = g(G_i)$ since selecting the $v_i$ gives us two
disjoint components. One consisting of the $G_j$ where $j \neq i$ together with the $v_j$ corresponding to these. For each vertex $v_j$ in $G-v_i$ we will
have the $\deg v_j$ is even, by all of these being neighbors to $v_i$. This component
has no playable vertices and thus has Grundy-value $0$. The other component
is by cunstruction just $G_i$. Thus $g(G-v_i) = 0 \oplus g(G_i) = g(G_i)$.

Now since we only have odd degree for the vertices $v_i, \; i \in I$ we
also get
$$g(G) = \mex \{g(G_i) : i \in I\} = K + 1$$
by selection of the $G_i$:s.

By induction the theorem follows.
\end{proof}

\section{Conclusions and future work} 
\label{conclusion}

% TODO: Is there a general characterization for the case of undirected graphs
% in particular is it computationally difficult (NP or PSPACE complete?) 

We have successfully proved a Conjecture of Shelton \cite{shelton} and
also we have affirmatively answered a question about the existance of
games for each Grundy-value which was posed by both Shelton\cite{shelton}
and Ottaway\cite{ottaway}. In actuality we prove a more general result about
the Grundy value of bipartite graphs rather than only grid graphs.

We do this by another construction than the one proposed by Shelton in his
conjecture. The essensial question of the existance of games for each
Grundy value has been answered, so there would be less use of proving
Shelton's second Conjecture, unless it could lead us to some insight into
the problem of determining Grundy values for other classes of graphs.

\subsection{Open problems}

The main problem of constructing a function for computing the Grundy value
of any graph, i.e. finding $g(G)$ for general graphs and not just simple
classes of graphs such as bipartite which is done in this paper, is still
an open problem. However the neither the result or the method from this
paper will be of much use for proving that. This is because we use specific
properties of the cycles in bipartite graphs and prove that these games
are much simpler than the one played on arbitrary graphs (it does not
matter what move you make). Hence, when solving this problem one will have
to use some other method than the one for bipartite graphs.

Even though the problem of determine the Grundy-value in a directed graph has
been shown to PSPACE-complete\cite{shelton}, there is no reason to dispair.
The problem in a directed graph might be much more complicated than in
the undirected case (as illustrated by the Even/even vertex removal game).
There is still hope for some simple graph invariant that characterizes
the games in terms of which player wins or the Grundy-value.

While the problem of Shelton's second conjecture is still open we have proved
that in fact there are graphs for every Grundy value and thus anwered the
underlying question by another construction. Proving this might however give
some more insight into how to determine the Grundy-value of other classes
of graphs.

\bibliography{bibl}{}
\bibliographystyle{plain} 

\end{document}